\newtheorem{theorem}{Theorem}
\newtheorem{corollary}{Corollary}
\theoremstyle{definition}
\newtheorem{remark}{Remark}
\title{Linear programming bounds for regular graphs} 
\author{
Hiroshi Nozaki
}
\begin{document}
\maketitle

\renewcommand{\thefootnote}{\fnsymbol{footnote}}
\footnote[0]{2010 Mathematics Subject Classification: 
90C05,  
(05D05).
\\
\noindent
{\it Hiroshi Nozaki}: 
	Department of Mathematics Education, 
	Aichi University of Education
	1 Hirosawa, Igaya-cho, 
	Kariya, Aichi 448-8542, 
	Japan.
	hnozaki@auecc.aichi-edu.ac.jp
}

\begin{abstract}
Delsarte, Goethals, and Seidel (1977) used  the linear 
programming method in order to find bounds for the size of spherical codes endowed with prescribed inner products between distinct points in the code.  
In this paper, we develop the linear programming method to obtain bounds for the number of vertices of  connected regular graphs endowed with given distinct eigenvalues. 
This method is proved by some ``dual'' technique of the spherical case, motivated from the theory of association scheme.   
As an application of this bound, we prove that a connected $k$-regular graph satisfying $g>2d-1$ has the minimum second-largest eigenvalue of all $k$-regular graphs of the same size, where $d$ is the number of distinct non-trivial eigenvalues, and $g$ is the girth. The known graphs satisfying $g>2d-1$ are Moore graphs, incidence graphs of regular generalized polygons of order $(s,s)$,  triangle-free strongly regular graphs,
and the odd graph of degree $4$. 
\end{abstract}
\textbf{Key words}:
linear programming bound,
graph spectrum,  
expander graph,
Ramanujan graph,
distance-regular graph, 
Moore graph. 

\section{Introduction}
Delsarte \cite{D73_2} has introduced the linear programming method to find bounds for the size of codes with prescribed distances over finite field. 
This is called Delsarte's method, and he stated it for codes in certain special association schemes, so called $Q$-polynomial schemes, including 
the Johnson scheme and the Hamming scheme. 
Delsarte, Goethals, and Seidel \cite{DGS77} gave the linear programming method on the Euclidean sphere. This is naturally generalized to the compact two-point homogeneous spaces \cite{KL78}. 
Delsarte's method is also extended to various situations like the permutation codes \cite{T99}, the Grassmannian codes \cite{B06}, or the ordered codes \cite{BP09}. The linear programming is very powerful to solve optimization problems, 
for instance maximizing the size of codes for given distances \cite{OS79,M08}, or maximizing the minimum distance for a fixed cardinality \cite{L92,CK07}. 
In the present paper, we develop the linear programming method to find bounds for the 
order of  
connected regular graphs with given distinct eigenvalues.   This method is not based on Delsarte's 
but a kind of ``dual'' technique of the spherical case 
inspired from the theory of association schemes. 

Let $X$ be a finite set, and $R_0, \ldots, R_d$ symmetric binary relations on $X$. 
The {\it $i$-th adjacency matrix} $\boldsymbol{A}_i$ is defined to be the matrix indexed by $X$ whose 
 $(x,y)$-entry is 1 if $(x,y) \in R_i$, 0 otherwise. 
A configuration $\mathfrak{X}=(X,\{R_i\}^{d}_{i=0})$ is called a {\it symmetric association scheme} of class $d$ if $\{\boldsymbol{A}_i \}_{i=0}^d$ satisfies the following: 
(1) $\boldsymbol{A}_0=\boldsymbol{I}$ (identity matrix),
(2) $\sum_{i=0}^d \boldsymbol{A}_i=\boldsymbol{J}$ (all-ones matrix),
(3) there exist  real numbers $p_{ij}^k$ such that $\boldsymbol{A}_i\boldsymbol{A}_j =\sum_{k=0}^d p_{ij}^k\boldsymbol{A}_k$ for all $i,j \in \{0,1,\ldots,d\}$. 
The vector space $\mathfrak{A}$ spanned by $\{\boldsymbol{A}_i \}_{i=0}^d$
over $\mathbb{C}$ forms a commutative algebra, and it is called 
the {\it Bose--Mesner algebra} of $\mathfrak{X}$. It is well known
that $\mathfrak{A}$ is semi-simple \cite[Section 2.3, II]{BIb},
hence it has  the primitive 
idempotents $\boldsymbol{E}_0=(1/|X|)\boldsymbol{J}, \boldsymbol{E}_1, \ldots, \boldsymbol{E}_d$, which form a basis of $\mathfrak{A}$. 

We have two remarkable classes of association schemes so called $P$-polynomial association schemes and 
$Q$-polynomial association schemes. An association scheme is said to be 
{\it $P$-polynomial} if for each $i\in \{0,1, \ldots, d\}$ there exists 
a polynomial $v_i$ of degree $i$ such that $\boldsymbol{A}_i=v_i(\boldsymbol{A}_1)$. 
A $P$-polynomial scheme has the relations as the path distances of the graph $(X,R_1)$, and $(X,R_1)$ becomes a distance-regular graph \cite{BCNb}. 
An association scheme is said to be 
{\it $Q$-polynomial} if for each $i\in \{0,1, \ldots, d\}$ there exists 
a polynomial $v^{*}_i$ of degree $i$ such that $|X|\boldsymbol{E}_i=v_i^*(|X|\boldsymbol{E}_1^\circ)$, where $\circ$ means  
the multiplication is the entry-wise product. 
Roughly speaking, the $P$-polynomial schemes and the $Q$-polynomial schemes correspond to discrete cases of 
the concepts of two-point homogeneous spaces and rank 1 symmetric spaces, respectively \cite[Section 3.6, III]{BIb}, \cite[Chapter 9]{CSb}. 

By swapping the matrix multiplication~$\cdot$ and the entry-wise multiplication~$\circ$, 
the bases $\{\boldsymbol{A}_i\}_{i=0}^d$ and $\{\boldsymbol{E}_i\}_{i=0}^d$ very similarly behave in 
the Bose--Mesner algebra. The following are basic equations for 
the bases \cite[Section 2.2, 2.3, II]{BIb}: 
\begin{align}
&\sum_{i=0}^d \boldsymbol{A}_i =\boldsymbol{J}=|X|\boldsymbol{E}_0, & &\sum_{i=0}^d \boldsymbol{E}_i=\boldsymbol{I}=\boldsymbol{A}_0, \label{1.1}\\ 
&\boldsymbol{A}_i \circ \boldsymbol{A}_j = \delta_{ij}\boldsymbol{A}_i, &
& \boldsymbol{E}_i \cdot \boldsymbol{E}_j = \delta_{ij}\boldsymbol{E}_i, \label{1.2} \\
&\boldsymbol{A}_i \cdot \boldsymbol{A}_j = \sum_{k=0}^d p_{ij}^k \boldsymbol{A}_k, &
&\boldsymbol{E}_i \circ \boldsymbol{E}_j = \frac{1}{|X|}\sum_{k=0}^d q_{ij}^k \boldsymbol{E}_k, \label{1.3} \\
&\boldsymbol{A}_i= \sum_{j=0}^d P_i(j)\boldsymbol{E}_j,&  &\boldsymbol{E}_i=\frac{1}{|X|}\sum_{j=0}^d Q_i(j) \boldsymbol{A}_j, \label{1.4} 
\end{align}
\begin{align}
&\boldsymbol{A}_i\cdot \boldsymbol{J} =k_i \boldsymbol{J}, & &|X|\boldsymbol{E}_i \circ \boldsymbol{I}= m_i \boldsymbol{I}, \label{1.5} \\
&\boldsymbol{A}_i \circ \boldsymbol{I}=0\ (i\ne 0), & & \boldsymbol{E}_i \cdot \boldsymbol{J}=0\ (i\ne 0), \label{1.9} \\
&\tau(\boldsymbol{A}_i)=|X|k_i, & &{\rm tr}(\boldsymbol{E}_i)=m_i, \label{1.6}\\
&{\rm tr} (\boldsymbol{A}_i)=0\ (i\ne 0), & &\tau(\boldsymbol{E}_i)=0\ (i \ne 0), \label{1.8}\\
&k_0=1, & &m_0=1, \label{1.7}
\end{align}
where $\delta_{ij}$ denotes the Kronecker delta,
$\tau(\boldsymbol{M})$ denotes the summation of all entries in $\boldsymbol{M}$,
$k_i$ is the degree of the graph $(X,R_i)$, 
and $m_i$ is the rank of $\boldsymbol{E}_i$. 
Here $p_{ij}^k$ is called the {\it intersection number}, and it is equal to
the size of $\{z \in X \mid (x,z) \in R_i, (z,y) \in R_j\}$ with 
$(x,y) \in R_k$. Naturally $p_{ij}^k$ is a non-negative integer. On the other hand, 
$q_{ij}^k$ is called the {\it Krein number}, it can be proved that 
it is a non-negative real number \cite{S73}.  Such a kind of similar properties 
obtained by swapping $\boldsymbol{A}_i$, $\boldsymbol{E}_i$, multiplications, and corresponding parameters 
is called a {\it dual property}. It is obviously seen that 
the dual concept of $P$-polynomial scheme is $Q$-polynomial scheme. 
There are a number of non-trivial dual properties between $P$-polynomial schemes 
and $Q$-polynomial schemes \cite{MT09,BIb}, and several conjectures are still left \cite{MT09}. 

The matrix $\boldsymbol{A}_i$ is regarded as a regular 
graph. The matrix $\boldsymbol{E}_i$ is positive semidefinite with equal diagonals, and it is 
interpreted as a spherical set. 
 We can observe the dual relationship of 
$\boldsymbol{A}_i$ and $\boldsymbol{E}_i$ in the Bose--Mesner algebra, and 
it  
shows how properties of graphs dually correspond to those of spherical sets. 
For example, \eqref{1.4} says that eigenvalues of graphs dually correspond to inner products in spherical sets. 
Table~\ref{tb:1} shows the dual correspondence of  the properties of graphs and spherical sets. 

\begin{table}\caption{Dual properties between regular graph and spherical set}\label{tb:1}
\begin{center}
\begin{tabular}{c|c||c} 
 regular graph  &  spherical set & reason\\
\hline 
$\boldsymbol{A}$: adjacency matrix & $\boldsymbol{E}$: Gram matrix &bases\\
$k$: degree & $m$: dimension &\eqref{1.5},\eqref{1.6}\\
regular & spherical& \eqref{1.5} \\
eigenvalues & inner products& \eqref{1.4} \\ 
connected & constant weight& \eqref{1.7} \\
no loop & spherical 1-design & \eqref{1.9} \\
no multiple edge & spherical 2-design & \eqref{1.2}\\
$\tau$ & ${\rm tr}$& \eqref{1.6}, \eqref{1.8} \\ 
Moore graph & tight spherical design & tightness \\
$F_i^{(k)}(x)$ & $\mathcal{Q}_i^{(m)}(x)$ & \\
${\rm tr}(F_i^{(k)}(\boldsymbol{A})) \geq 0$& $\tau(\mathcal{Q}_i^{(m)}(\boldsymbol{E}^\circ))\geq 0$ & \cite{S66,DGS77} \\
 ${\rm tr}(F_i^{(k)}(\boldsymbol{A})) = 0$ for $1\leq i \leq g-1$ &$\tau(\mathcal{Q}_i^{(m)}(\boldsymbol{E}^\circ)) = 0$ for $1\leq i \leq t$ &\\
$\Leftrightarrow$ girth $g$ & $\Leftrightarrow$ spherical $t$-design &\cite{S66,DGS77}
\end{tabular}
\end{center}
\end{table}

We can interpret the Euclidean sphere $S^{m-1}$ as a continuous case of $Q$-polynomial scheme. 
The polynomial $v_i^*$ on a $Q$-polynomial scheme corresponds to 
the Gegenbauer polynomial $\mathcal{Q}^{(m)}_i$ on $S^{m-1}$ \cite{DGS77}. 
We have fundamental parameters $s$ and $t$ for a finite subset $X$ in $S^{m-1}$. 
The parameter $s$ is just the number of the Euclidean distances between distinct points in $X$. If $X$ has only $s$ distances, then we have 
\begin{equation} \label{eq:s-dis}
|X| \leq \binom{m+s-1}{s}+\binom{m+s-2}{s-1}. 
\end{equation}
The other parameter $t$ is the strength in the sense of spherical design. We call $X$ 
a spherical $t$-design if for any polynomial $f$ in $m$ variables of degree at most $t$ the following equation 
holds: 
\[
\frac{1}{|S^{m-1}|}\int_{S^{m-1}} f(x) dx= \frac{1}{|X|} \sum_{x \in X} f(x),
\]
where $|S^{m-1}|$ is the volume of $S^{m-1}$. 
One of unexpected results is that if $t\geq 2s-2$ holds, then $X$ has the structure of 
a $Q$-polynomial scheme with the relations of distances \cite{DGS77}. 
For a spherical $2e$-design $X$ in $S^{m-1}$, we have an absolute bound \cite{DGS77}:  
\[
|X| \geq \binom{m+e-1}{e} + \binom{m+e-2}{e-1}. 
\] 
A spherical design is said to be {\it tight} if it attains this equality. 
A tight design satisfies $t=2s$ \cite{DGS77}, and hence it becomes a $Q$-polynomial scheme. A tight design also attains the bound \eqref{eq:s-dis}. 
Moreover the polynomial $v_i^{\ast}$
of a tight design coincides with $\mathcal{Q}_i^{(m)}$.

For connected regular graph, we have a very similar situation to the above argument on the sphere. Let $G$ be a connected $k$-regular graph with $v$ vertices. Throughout this paper, 
we assume a graph is simple. 
Since a graph with $d+1$ distinct eigenvalues is of diameter at most $d$, 
we can change the assumption of the Moore bound to the number of eigenvalues. 
Namely if $G$ has only   
$d+1$ distinct eigenvalues, then we have  
\[
v \leq 1+k\sum_{j=0}^{d-1}(k-1)^j. 
\]
  If equality holds, then 
$G$ is called a {\it Moore graph}. Tutte \cite{T66} showed that if $G$ is of girth $2e+1$, then we have 
\[
v \geq 1+k\sum_{j=0}^{e-1}(k-1)^j.
\]
The graph that attains this equality becomes a Moore graph. It is well known that a Moore graph is distance-regular.
Actually we can show that if $g\geq 2d-1$ holds, then $G$ is distance-regular (Theorem~\ref{thm:girth}).   
Let $F_i^{(k)}$ be the polynomial of degree $i$ defined by \eqref{eq:F_012} and \eqref{eq:F_i} in 
Section~\ref{sec:2}.  
The polynomial $v_i$ of a $k$-regular Moore 
graph coincides with $F_i^{(k)}$. 
Apparently the dual concept of tight spherical design is Moore graph, and the polynomial $F_i^{(k)}$ dually corresponds to the Gegenbauer polynomial $\mathcal{Q}_i^{(m)}$. 

The linear programming method for spherical codes is 
essentially based on the positive definiteness of the Gegenbauer polynomials, namely 
$\tau(\mathcal{Q}_i^{(m)}(\boldsymbol{E}^\circ)) \geq 0$, where 
$\boldsymbol{E}$ is the Gram matrix. In this paper, 
 we dually show the linear programming method for connected regular graphs by using the property ${\rm tr}(F_i^{(k)}(\boldsymbol{A})) \geq 0$, where $\boldsymbol{A}$ is the adjacency matrix.     

We can apply the linear programming method for determining the graph maximizing the spectral gap.
The {\it spectral gap} of a graph is the difference 
between the first and second largest 
eigenvalues of the graph. 
The {\it edge expansion ratio} $h(G)$ of a $k$-regular graph $G=(V,E)$ is defined as 
\[
h(G)=\min_{S\subset V, |S|\leq |V|/2} \frac{|\partial S|}{|S|},
\]
where $\partial S=\{\{u,v\} \mid u \in S, v\in V\setminus S, \{u,v\} \in E  \}$. 
By the spectral gap $\tau$ of $G$, we have 
$\tau/2 \leq h(G) \leq \sqrt{2k\tau}$ \cite{A86,AM85,D84}. 
This  implies that a graph with large spectral gap has high connectivity.  
The second-largest eigenvalue cannot be much smaller than $2\sqrt{k-1}$ \cite{A86}. 
Ramanujan graphs have an asymptotically smallest possible second-largest 
eigenvalue (see \cite{HLW06}).  
Several regular graphs with very small second-largest eigenvalues 
are determined (see \cite{KS13}). 

The dual concept of the graphs maximizing the spectral gap is well known as optimal 
spherical code in the sense of maximizing the minimum distance. 
The optimal configurations of $n$ points on $S^2$ are known only for $n\leq 13$, and $n=24$ \cite[Chapter 3]{EZ01}, \cite{MT12}. 
For higher dimensions, the linear or semidefinite programming bound determined many optimal codes \cite{L92,CK07,BV09}. In particular, we have a strong theorem using the parameters $s$ and $t$, namely if $t\geq 2s-1$ holds, then the set is optimal \cite{L92,CK07}. In the present paper, 
as the dual theorem of it, we prove that a connected $k$-regular graph satisfying $g\geq 2d$ has the minimum second-largest eigenvalue of all $k$-regular graphs of the same size, where $d$ is the number of distinct non-trivial eigenvalues, and $g$ is the girth.

\section{Linear programming method} \label{sec:2}
In the present section, we give the linear programming bounds for 
connected regular graphs. First let us introduce certain polynomials 
$F_i^{(k)}(x)$ which play a key role in the linear programming method. Indeed $F_i^{(k)}(x)$ is the polynomial attached to the homogeneous tree of degree $k$, which is an infinite distance-regular graph.

A graph $G=(V,E)$ is said to be {\it locally finite} if the
degree of any vertex is finite. We also consider an infinite graph here. 
A {\it path} in a graph is a sequence of vertices, where any two consecutive vertices are connected. 
Let $d(x,y)$ be the shortest path distance
from $x\in V$ to $y \in V$. 
The {\it $i$-th distance matrix} $\boldsymbol{A}_i$ of $G$ is defined to be the matrix indexed by 
$V$ whose $(x,y)$-entry is 1 if $d(x,y)=i$, 0 otherwise. 
In particular $\boldsymbol{A}_1$ is called the {\it adjacency matrix} of $G$. 

A locally finite graph $G=(V,E)$ is called a {\it distance-regular graph} 
if for any choice of $x,y \in V$ with $d(x,y)=k$, the number of vertices
$z\in V$ such that $d(x,z)=i$, $d(z,y)=j$ is independent 
of the choice $x, y$.  
For $x,y \in V$ with $d(x,y)=k$, the number 
$
p_{ij}^k=|\{z \in V | d(x,z)=i, d(z,y)=j\}|
$
is called the {\it intersection number} of a distance-regular graph. We use the notation $a_i=p_{1,i}^i$, $b_i=p_{1,i+1}^i$, and $c_i=p_{1,i-1}^i$. 
The {\it intersection array} of a distance-regular graph is defined to be 
\[
\begin{pmatrix}
\ast & c_1& c_2& \cdots\\
a_0&a_1 &a_2 & \cdots \\
b_0&b_1 &b_2 & \cdots 
\end{pmatrix}.\]
The matrix $\boldsymbol{A}_i$ of a distance-regular graph can be written as the polynomial $v_i$ in $\boldsymbol{A}_1$ of degree $i$ \cite[page 127]{BCNb}, where $v_i$ is defined by
\begin{align*}
&v_0(x)=1, \qquad v_1(x)=x,\\
&c_{i+1}v_{i+1}(x)=(x-a_i)v_i(x)- b_{i-1}v_{i-1}(x) \qquad (i=1,2,\ldots). 
\end{align*}

A homogeneous tree of degree $k$ is an infinite distance-regular graph with  intersection numbers
\[
b_0=k, \quad
b_i=k-1 (i=1,2,\ldots), \quad
c_i=1 (i=1,2,\ldots), \quad
a_i=0 (i=0,1,2,\ldots). 
\]
Let $F_i^{(k)}$ denote a polynomial of degree $i$ defined by: 
\begin{equation}\label{eq:F_012}
F_0^{(k)}(x)=1, \qquad F_1^{(k)}(x)=x, \qquad  F_2^{(k)}(x)=x^2 - k,   
\end{equation}
and
\begin{equation}\label{eq:F_i}
F_i^{(k)}(x)=x F_{i-1}^{(k)}(x)- (k-1) F_{i-2}^{(k)}(x)
\end{equation}
for $i\geq 3$. 
Let $q=\sqrt{k-1}$. The polynomials $F_i^{(k)}$ form a sequence of orthogonal polynomials with respect to the weight 
\begin{equation} \label{eq:w(x)}
w(x)=\frac{\sqrt{4q^2-x^2}}{k^2-x^2} 
\end{equation}
on the interval $[-2q, 2q]$ (see \cite[Section~4]{HOb}). 
Note that $F_i^{(k)}(k)=k(k-1)^{i-1}$ for any $i\geq 1$.

A path $u_0 \sim u_1 \sim \cdots \sim u_p$ is said to be {\it reducible} if any sequence 
$u_i \sim u_{j} \sim u_i$ appears \cite{S66}. A path is said to be {\it irreducible} if 
the path is not reducible. 
\begin{theorem}[{\cite{S66}}] \label{thm:red_path}
Let $G$ be a connected $k$-regular graph with adjacency matrix $\boldsymbol{A}$. Then 
the $(u,v)$-entry of $F_i^{(k)}(\boldsymbol{A})$ is the number of irreducible paths 
of length $i$ from $u$ to $v$. 
\end{theorem}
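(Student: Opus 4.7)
The plan is to prove the claim by induction on $i$, using the defining recurrence $F_i^{(k)}(x)=xF_{i-1}^{(k)}(x)-(k-1)F_{i-2}^{(k)}(x)$ for $i\geq 3$ together with the explicit formulas for $F_0,F_1,F_2$. Let $N_i(u,v)$ denote the number of irreducible paths of length $i$ from $u$ to $v$; the goal is $[F_i^{(k)}(\boldsymbol{A})]_{uv}=N_i(u,v)$.

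For the base cases, $F_0^{(k)}(\boldsymbol{A})=\boldsymbol{I}$ correctly records the single empty path when $u=v$, and $F_1^{(k)}(\boldsymbol{A})=\boldsymbol{A}$ counts length-$1$ paths (all automatically irreducible). For $i=2$, the entry $[\boldsymbol{A}^2]_{uv}$ counts walks $u\sim w\sim v$. If $u\neq v$ these are automatically irreducible; if $u=v$ all $k$ such walks $u\to w\to u$ are reducible. Subtracting $k\boldsymbol{I}$ therefore yields $N_2(u,v)$, matching $F_2^{(k)}(\boldsymbol{A})=\boldsymbol{A}^2-k\boldsymbol{I}$.

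For $i\geq 3$, assume the statement holds for $i-1$ and $i-2$. Expand
\[
[\boldsymbol{A}F_{i-1}^{(k)}(\boldsymbol{A})]_{uv}=\sum_{w\sim u}[F_{i-1}^{(k)}(\boldsymbol{A})]_{wv}=\sum_{w\sim u}N_{i-1}(w,v).
\]
Each term counts irreducible paths $(w=x_0,x_1,\ldots,x_{i-1}=v)$; prepending $u$ produces a length-$i$ walk from $u$ to $v$ that is irreducible if and only if $u\neq x_1$. Hence the sum equals $N_i(u,v)$ plus an overcount consisting of prepended walks with $x_1=u$, i.e.\ walks of the form $(u,w,u,x_2,\ldots,x_{i-1}=v)$ where $(w,u,x_2,\ldots,v)$ is irreducible of length $i-1$. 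Reindexing, these are in bijection with pairs consisting of an irreducible path $(u,x_2,\ldots,v)$ of length $i-2$ and a neighbor $w$ of $u$ satisfying $w\neq x_2$; the irreducibility conditions on the original walk $(w,u,x_2,\ldots,v)$ reduce precisely to the irreducibility of $(u,x_2,\ldots,v)$ together with $w\neq x_2$. By $k$-regularity there are exactly $k-1$ such choices of $w$ for each irreducible path of length $i-2$, so the overcount equals $(k-1)N_{i-2}(u,v)$. Invoking the inductive hypothesis and the recurrence gives
\[
[F_i^{(k)}(\boldsymbol{A})]_{uv}=[\boldsymbol{A}F_{i-1}^{(k)}(\boldsymbol{A})]_{uv}-(k-1)[F_{i-2}^{(k)}(\boldsymbol{A})]_{uv}=N_i(u,v).
\]

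The main delicacy is the accounting in the overcount: one must see that the single reducibility defect created by prepending $u$ occurs precisely at position $x_1$, and that after identifying the tail with an irreducible path of length $i-2$ the constraint $w\neq x_2$ accounts for exactly one excluded neighbor of $u$. This clarifies why the coefficient in the recurrence is $k-1$ for $i\geq 3$, while the special base case $F_2^{(k)}=x^2-k$ uses coefficient $k$, since at $i=2$ there is no $x_2$ to avoid and every neighbor of $u$ contributes to the backtracking overcount.
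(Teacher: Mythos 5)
Your proof is correct. Note that the paper does not prove this statement at all --- it is quoted from Singleton~\cite{S66} as a known fact --- so there is no in-paper argument to compare against; your induction (prepending a vertex to a non-backtracking walk, identifying the single possible new backtrack at the first step, and counting the $k-1$ admissible neighbours to explain the recurrence coefficient, versus $k$ in the degenerate case $i=2$) is the standard and complete way to establish it, and your accounting of the overcount is accurate.
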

By Theorem~\ref{thm:red_path}, the following is obvious. 
\begin{corollary} \label{coro:girth}
Let $G$ be a connected $k$-regular graph with adjacency matrix $\boldsymbol{A}$. 
Then the following are equivalent. 
\begin{enumerate}
\item ${\rm tr}( F_i^{(k)}(\boldsymbol{A}))=0$ for each $1\leq i \leq g-1$, and ${\rm tr}( F_{g}^{(k)}(\boldsymbol{A}))\ne 0$. 
\item $G$ is of girth $g$.
\end{enumerate}
\end{corollary}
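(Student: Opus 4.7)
The plan is to use Theorem~\ref{thm:red_path} to reinterpret ${\rm tr}(F_i^{(k)}(\boldsymbol{A}))$ as a count of closed irreducible paths, and then show that the minimum length of a closed irreducible path in $G$ is exactly the girth~$g$.

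First I would unpack the trace. Since the $(u,v)$-entry of $F_i^{(k)}(\boldsymbol{A})$ is the number of irreducible paths of length $i$ from $u$ to $v$, summing the diagonal gives
\[
{\rm tr}(F_i^{(k)}(\boldsymbol{A})) = \sum_{u \in V} \#\{\text{irreducible closed paths of length } i \text{ based at } u\}.
\]
This is a sum of non-negative integers, so the trace vanishes if and only if $G$ admits no irreducible closed path of length~$i$.

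The main step is the combinatorial lemma: in a graph of girth $g$, the shortest irreducible closed path has length exactly $g$, and closed irreducible paths of length $g$ exist. For the lower bound, I would argue that any closed irreducible path $u_0 \sim u_1 \sim \cdots \sim u_i = u_0$ contains a cycle of length $\leq i$. Choose indices $0 \leq j < l \leq i$ with $u_j = u_l$ minimizing $l-j$. Irreducibility forbids $u_{j+2} = u_j$, so $l - j \geq 3$, and then $u_j \sim u_{j+1} \sim \cdots \sim u_l$ is a simple cycle of length $l-j \leq i$. Hence $i \geq g$. Conversely, any $g$-cycle in $G$, traversed starting at any of its vertices in either direction, yields an irreducible closed path of length~$g$, so such paths exist.

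Combining these observations: if $G$ has girth $g$, then ${\rm tr}(F_i^{(k)}(\boldsymbol{A})) = 0$ for $1 \leq i \leq g-1$ (no irreducible closed paths of such length exist) and ${\rm tr}(F_g^{(k)}(\boldsymbol{A})) > 0$ (every $g$-cycle contributes), giving (2)$\Rightarrow$(1). The reverse direction (1)$\Rightarrow$(2) follows immediately: if (1) holds and $G$ has girth $g'$, applying the implication already proved to $g'$ would force ${\rm tr}(F_{g'}^{(k)}(\boldsymbol{A}))$ to be both zero (if $g' < g$, by (1)) or nonzero (if $g' > g$, by (1) applied at $g$), so $g' = g$.

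I do not expect a serious obstacle; the only thing to be careful about is the argument that a closed irreducible path of length $i > 0$ forces a cycle of length $\leq i$, in particular the use of irreducibility to rule out $l - j = 2$. Everything else is direct bookkeeping on top of Theorem~\ref{thm:red_path}.
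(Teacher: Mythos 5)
Your proof is correct and follows exactly the route the paper intends: the paper dismisses this corollary as ``obvious'' from Theorem~\ref{thm:red_path}, and your argument is precisely the elaboration of that claim, identifying ${\rm tr}(F_i^{(k)}(\boldsymbol{A}))$ with the count of closed irreducible (non-backtracking) paths and checking that the shortest such path has length equal to the girth. The care you take with the case $l-j=2$ (ruled out by irreducibility) and with verifying that a traversed $g$-cycle is itself irreducible is exactly the bookkeeping the paper leaves to the reader.
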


The following is the linear programming bound for connected regular graphs. 
\begin{theorem}\label{thm:lp_bound}
Let $G$ be a connected $k$-regular graph with $v$ vertices. 
Let $\tau_0=k, \tau_1, \ldots, \tau_d $ be the distinct eigenvalues of $G$. 
Suppose there exists a polynomial $f(x)=\sum_{i\geq 0}  f_i F_i^{(k)}(x)$ such that  
$f(k)>0$, $f(\tau_i) \leq 0$ for any $i\geq 1$, $f_0>0$, and $f_i \geq 0$ for any $i\geq 1$. Then we have
\begin{equation} \label{eq:lp} 
v \leq \frac{f(k)}{f_0}. 
\end{equation}
\end{theorem}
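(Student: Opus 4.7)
The plan is to mimic the classical Delsarte linear programming argument, but with the roles of matrix multiplication and entry-wise multiplication swapped (as Table~\ref{tb:1} suggests). Concretely, I would compute ${\rm tr}(f(\boldsymbol{A}))$ in two different ways and compare them, where $\boldsymbol{A}$ is the adjacency matrix of $G$.

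First, I would expand spectrally. Since $G$ is connected and $k$-regular, $\tau_0=k$ has multiplicity $1$; let $m_i$ denote the multiplicity of $\tau_i$ for $i\geq 1$. Applying $f$ to $\boldsymbol{A}$ and taking the trace gives
\begin{equation*}
{\rm tr}(f(\boldsymbol{A})) = f(k) + \sum_{i=1}^{d} m_i\, f(\tau_i).
\end{equation*}
By hypothesis each $f(\tau_i)\leq 0$ for $i\geq 1$ and each $m_i\geq 0$, so this upper-bounds ${\rm tr}(f(\boldsymbol{A}))$ by $f(k)$.

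Second, I would expand $f$ in the $F_i^{(k)}$ basis and use the key trace non-negativity. By Theorem~\ref{thm:red_path}, the diagonal entries of $F_i^{(k)}(\boldsymbol{A})$ count closed irreducible walks of length $i$ starting and ending at a vertex, hence are non-negative integers; in particular ${\rm tr}(F_i^{(k)}(\boldsymbol{A}))\geq 0$ for every $i\geq 1$, while ${\rm tr}(F_0^{(k)}(\boldsymbol{A}))={\rm tr}(\boldsymbol{I})=v$. Linearity gives
\begin{equation*}
{\rm tr}(f(\boldsymbol{A})) = f_0\, v + \sum_{i\geq 1} f_i\, {\rm tr}(F_i^{(k)}(\boldsymbol{A})) \geq f_0\, v,
\end{equation*}
using $f_i\geq 0$ for $i\geq 1$.

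Combining the two estimates yields $f_0\, v \leq f(k)$, and since $f_0>0$ we conclude $v\leq f(k)/f_0$. There is essentially no hard step here: the whole proof is a dual translation of the spherical LP argument, where positive semidefiniteness of Gegenbauer evaluations (a Schur-product/trace of the entry-wise operator) is replaced by the combinatorial non-negativity of ${\rm tr}(F_i^{(k)}(\boldsymbol{A}))$ supplied by Theorem~\ref{thm:red_path}. The only thing to be careful about is invoking $m_0=1$, which is where the connectedness assumption is used.
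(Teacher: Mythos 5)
Your proposal is correct and is essentially the paper's own argument: the paper likewise computes ${\rm tr}(f(\boldsymbol{A}))$ twice, bounding it above by $f(k)$ via the spectral decomposition (using $f(\tau_i)\leq 0$ and ${\rm tr}(\boldsymbol{E}_i)=m_i\geq 0$ with ${\rm tr}(\boldsymbol{E}_0)=1$ from connectedness) and below by $vf_0$ via the expansion in the $F_i^{(k)}$ basis together with ${\rm tr}(F_i^{(k)}(\boldsymbol{A}))\geq 0$ from Theorem~\ref{thm:red_path}. No substantive differences.
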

\begin{proof}
Let $\boldsymbol{A}$ be the adjacency matrix of $G$. From the spectral decomposition 
$\boldsymbol{A}=\sum_{i=0}^d \tau_i \boldsymbol{E}_i$, we have 
\begin{equation} \label{eq:lp_1}
\sum_{i=0}^d f(\tau_i)\boldsymbol{E}_i =f(\boldsymbol{A})=\sum_{i\geq 0} f_i F_i^{(k)}(\boldsymbol{A})
=f_0 \boldsymbol{I} + \sum_{i\geq 1} f_i F_i^{(k)}(\boldsymbol{A}). 
\end{equation}
Taking the traces in  \eqref{eq:lp_1}, we have  
\[
f(k)={\rm tr} (f(k)\boldsymbol{E}_0)
 \geq {\rm tr}( \sum_{i=0}^d f(\tau_i)\boldsymbol{E}_i) 
={\rm tr}(f_0 \boldsymbol{I} + \sum_{i\geq 1} f_i F_i^{(k)}(\boldsymbol{A}))
\geq {\rm tr}(f_0 \boldsymbol{I})=vf_0.
\] 
Therefore we have $
v \leq f(k)/f_0
$.
\end{proof}

\begin{remark}
We can normalize $f_0=1$ in Theorem~\ref{thm:lp_bound}.
\end{remark}

\begin{remark} \label{rem:attain}
Let $f$ be a polynomial which satisfies the condition in 
Theorem~\ref{thm:lp_bound}. 
 The equality holds in \eqref{eq:lp} if and only if 
$f_i{\rm tr} (F_i^{(k)}(\boldsymbol{A}))=0$ for any $i=1,\ldots, {\rm deg}(f)$, and $f(\tau_i)=0$ for any $i=1,\ldots, d$. In particular, if $f_i>0$ for any $i$, then the girth of $G$ is at least 
${\rm deg}(f)+1$ by Corollary~\ref{coro:girth}. 
\end{remark}

\begin{remark}
Theorem \ref{thm:lp_bound} can be expressed as 
the following linear programming problem and its dual.
{\small
\[
v \leq \max_{m_i} \left\{1+m_1+\cdots+m_d \mid 
\begin{array}{cc}
-\sum_{i=1}^d m_i F_j^{(k)}(\tau_i)\leq F_j^{(k)}(k), & j=1,\ldots,u, \\
m_i\geq 0, & i=1,\ldots ,d 
\end{array}
\right\},
\]
\[
v \leq \min_{f_j} \left\{ 1+f_1F_1^{(k)}(k)+\cdots+f_uF_u^{(k)}(k) \mid 
\begin{array}{cc}
-\sum_{j=1}^u f_j F_j^{(k)}(\tau_i) \geq 1, & 
i=1,\ldots,d,\\
 f_j\geq 0, &j=1,\ldots, u
\end{array}
\right\},
\]
}
where $u$ is the degree of $f$, $m_i$ is the multiplicity of $\tau_i$ and $f_0=1$.
\end{remark}
\begin{remark}
Delsarte, Goethals, and Seidel \cite{DGS77} gave the 
linear programming bounds for spherical codes by using inner products and Gegenbauer polynomials,  instead of
eigenvalues and $F_i^{(k)}$. This is the dual version of 
Theorem~\ref{thm:lp_bound}.
\end{remark}

\section{Minimizing the second-largest eigenvalue}
For fixed $v$ and $k$, a graph $G$ is said to be {\it extremal expander} 
if $G$ has the minimum second-largest eigenvalue in
all $k$-regular graphs of order $v$. 
A disconnected graph is not extremal expander, because the
first and second largest eigenvalues are equal. 
In the present section, we obtain extremal expander graphs for several $v$ and $k$ by applying the linear programming method.  First we give several results related to $F_i^{(k)}(x)$. 

\begin{theorem} \label{thm:posi_coef}
Let $F_i^{(k)}(x)F_j^{(k)}(x)=\sum_{l=0}^{i+j} p_l(i,j)F_l^{(k)}(x)$
for real numbers $p_l(i,j)$. 
Then we have
$
p_0(i,j)=F_i^{(k)}(k)\delta_{ij}$ and   $p_l(i,j) \geq 0
$
for all $l,i,j$. Moreover
$p_l(i,j)>0$ if and only if $|i-j|\leq l \leq i+j$ and $l \equiv i+j \pmod{2}$. 
\end{theorem}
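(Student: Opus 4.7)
The plan is to realize $F_i^{(k)}$ as the sequence of distance polynomials of the homogeneous tree $T_k$ of degree $k$, and then to read the decomposition $F_i^{(k)}F_j^{(k)}=\sum_l p_l(i,j)F_l^{(k)}$ off the multiplication table of the distance matrices of $T_k$.

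First I would match the recurrence \eqref{eq:F_012}--\eqref{eq:F_i} against the distance-polynomial recurrence $c_{i+1}v_{i+1}(x)=(x-a_i)v_i(x)-b_{i-1}v_{i-1}(x)$ with the intersection numbers of $T_k$, namely $b_0=k$, $b_i=k-1$ for $i\ge 1$, $c_i=1$, $a_i=0$. This shows $F_i^{(k)}=v_i$, so that $F_i^{(k)}(\boldsymbol{A})=\boldsymbol{A}_i$, where $\boldsymbol{A}$ and $\boldsymbol{A}_i$ are the adjacency and $i$-th distance matrices of $T_k$ (equivalently, this is Theorem~\ref{thm:red_path} on the tree, since on a tree every irreducible path is a geodesic).

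Next I would compute $\boldsymbol{A}_i \boldsymbol{A}_j$ directly on $T_k$. For $u,v\in T_k$ the $(u,v)$-entry counts vertices $z$ with $d(u,z)=i$ and $d(z,v)=j$; by vertex-transitivity of $T_k$ this depends only on $l:=d(u,v)$ and equals a non-negative integer $p_{ij}^l$. Hence $\boldsymbol{A}_i\boldsymbol{A}_j=\sum_{l\ge 0}p_{ij}^l\boldsymbol{A}_l$ entrywise. Since $\{F_l^{(k)}\}_{l\ge 0}$ is a basis of $\mathbb{R}[x]$, the polynomial product has a unique expansion $F_i^{(k)}F_j^{(k)}=\sum_l p_l(i,j)F_l^{(k)}$; evaluating at $\boldsymbol{A}$ and using that the matrices $\boldsymbol{A}_l$ have pairwise disjoint supports on $T_k$, I can read off $p_l(i,j)=p_{ij}^l\ge 0$. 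The identity $p_0(i,j)=F_i^{(k)}(k)\delta_{ij}$ is then immediate: with $u=v$, the count forces $i=j$ and equals the sphere size $|\{z:d(u,z)=i\}|$, which is $1$ for $i=0$ and $k(k-1)^{i-1}=F_i^{(k)}(k)$ for $i\ge 1$.

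Finally, the positivity characterization reduces to the geometry of $T_k$. Since $T_k$ is bipartite, any $z$ satisfies $d(u,z)+d(z,v)\equiv d(u,v)\pmod 2$, forcing $l\equiv i+j\pmod 2$, and the triangle inequality gives $|i-j|\le l\le i+j$. Conversely, whenever these parity and triangle conditions hold, one exhibits an explicit witness $z$ on the unique $u$--$v$ geodesic or on a suitable branch off it: if $l=i+j$ take $z$ on the geodesic; if $l=|i-j|$ take $z$ on the continuation of the geodesic past the closer endpoint; and otherwise branch off the geodesic at an intermediate vertex by an amount determined by $(i+j-l)/2$. The main obstacle is bookkeeping in this last step, verifying that these three families of constructions cover every admissible triple $(i,j,l)$ and that the parity/triangle constraints correspond precisely to the existence of such a witness; the existence of enough valence at each vertex of $T_k$ is what guarantees the branching step can always be performed.
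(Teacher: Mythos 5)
Your proposal is correct and follows essentially the same route as the paper: identify $F_i^{(k)}$ with the distance polynomials of the homogeneous tree $T_k$, so that $p_l(i,j)$ equals the intersection number $p_{ij}^l$ of $T_k$, and then finish by counting. The paper compresses the final step into ``a counting argument,'' whereas you spell out the bipartite/triangle-inequality constraints and the witness construction (correctly noting that the branching step needs enough valence, i.e.\ $k\geq 3$, for the ``moreover'' part); this is a faithful expansion rather than a different proof.
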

\begin{proof}
Let $T_k$ be a homogeneous tree of degree $k$. 
Let $\boldsymbol{A}_i$ be the $i$-th distance matrix of $T_k$, 
and $p_{ij}^k$ the intersection number of $T_k$. 
Since $F_i^{(k)}(x)$ is the polynomial attached to $T_k$, we have
\begin{equation}\label{eq:AsAt}
\sum_{l=0}^{i+j} p_l(i,j)\boldsymbol{A}_l=\sum_{l=0}^{i+j} p_l(i,j)F_l^{(k)}(\boldsymbol{A}_1)=F_i^{(k)}(\boldsymbol{A}_1)F_j^{(k)}(\boldsymbol{A}_1)=\boldsymbol{A}_i\boldsymbol{A}_j=
\sum_{l=0}^{i+j} p_{ij}^l\boldsymbol{A}_l.
 \end{equation}
Clearly $p_l(i,j)=p_{ij}^l$ holds. 
This theorem now follows by a counting argument. 
\end{proof}

Since $F_{i+1}^{(k)}(k)-(k-1)F_i^{(k)}(k)=0$ holds, let $G_i^{(k)}(x)$ denote the polynomial of degree $i$
\[
G_i^{(k)}(x)=\frac{F_{i+1}^{(k)}(x)-(k-1)F_i^{(k)}(x)}{x-k} 
\]
for any $i\geq 1$, and 
$G_0^{(k)}(x)=1$. 
By the three-term recurrence relation \eqref{eq:F_i}, it holds that
\[
G_i^{(k)}(x)=\sum_{j=0}^iF_j^{(k)}(x). 
\]
From Lemmas~3.3, 3.5 in \cite{CK07}, $G_0,G_1,\ldots$ are monic orthogonal polynomials with respect to the positive weight
$
u(x)=(k-x)w(x)
$ 
on the interval $[-2q, 2q]$, where 
$w(x)$ is defined in \eqref{eq:w(x)}.  

\begin{theorem}[{\cite[Theorem~3.1]{CK07}}] \label{thm:CK}
Let $p_0,p_1, \ldots$ be  monic orthogonal polynomials with ${\rm deg}(p_i)=i$.  
Then for any $\alpha\in \mathbb{R}$, the polynomial 
$p_n+\alpha p_{n-1}$ has $n$ distinct real roots 
$
r_1<\cdots<r_n$. 
Moreover for $k<n$, 
$
\prod_{i=1}^k (x-r_i)
$
has positive coefficients in terms of $p_0(x),p_1(x), \ldots, p_k(x)$. 
\end{theorem}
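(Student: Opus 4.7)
The plan is to prove the two parts of the theorem by rather different techniques.

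For Part 1 (that $p_n + \alpha p_{n-1}$ has $n$ distinct real roots for every $\alpha\in\mathbb{R}$), I would work with the Jacobi matrix attached to the three-term recurrence. Since the $p_j$ are monic orthogonal, they satisfy $p_{j+1}(x) = (x-a_j)p_j(x) - b_j p_{j-1}(x)$ with $b_j>0$, and a standard calculation shows $p_n(x) = \det(xI - J_n)$, where $J_n$ is the symmetric tridiagonal matrix with diagonal $(a_0,\dots,a_{n-1})$ and off-diagonals $\sqrt{b_1},\dots,\sqrt{b_{n-1}}$. A cofactor expansion along the last row then gives $p_n(x) + \alpha p_{n-1}(x) = \det(xI - J_n')$, where $J_n'$ is obtained from $J_n$ by subtracting $\alpha$ from the $(n,n)$-entry. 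Since $J_n'$ is a real symmetric tridiagonal matrix whose off-diagonal entries are all nonzero, its spectrum is real and simple, giving the desired roots $r_1<\cdots<r_n$ uniformly for every $\alpha$.

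For Part 2 (that $q_k(x)=\prod_{i=1}^k(x-r_i) = \sum_{j=0}^k c_j p_j(x)$ has $c_j>0$ for all $j$), since $q_k$ is monic one has $c_k=1$, and orthogonality reduces the task to showing $\langle q_k, p_j\rangle>0$ for $0\le j\le k-1$. The structural fact to exploit is that $p_n + \alpha p_{n-1}$ is \emph{quasi-orthogonal of order one}, meaning $\langle p_n + \alpha p_{n-1}, f\rangle = 0$ for every polynomial $f$ with $\deg f \le n-2$. Standard quasi-orthogonal theory then yields a Gauss-type quadrature
\[
 \int f\, d\mu \;=\; \sum_{i=1}^n \lambda_i\, f(r_i)
\]
that is exact for $\deg f\le 2n-2$. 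Applied to $f=q_k p_j$, whose degree is at most $2k-1\le 2n-3$, the vanishing of $q_k$ at $r_1,\dots,r_k$ collapses the identity to
\[
 \langle q_k, p_j\rangle \;=\; \sum_{i=k+1}^n \lambda_i\, q_k(r_i)\, p_j(r_i),
\]
in which every factor $q_k(r_i)=\prod_{l\le k}(r_i-r_l)$ is strictly positive because $r_i>r_l$ for $i>k$.

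The main obstacle is converting this identity into strict positivity of \emph{every} $c_j$. Positivity of the quadrature weights $\lambda_i$ alone is not enough, since the signs of $p_j(r_i)$ vary with both $i$ and $j$; the argument must draw on finer structure, such as the Chebyshev (Markov) system property of $(p_0,\dots,p_{n-1})$ or sharp interlacing between the roots of $p_n+\alpha p_{n-1}$ and those of lower-degree orthogonal polynomials. A tempting alternative is induction on $k$ via the recurrence $(x-r_{k+1})p_j = p_{j+1} + (a_j - r_{k+1})p_j + b_j p_{j-1}$, which yields
\[
 c_j^{(k+1)} \;=\; c_{j-1}^{(k)} + (a_j - r_{k+1})\, c_j^{(k)} + b_{j+1}\, c_{j+1}^{(k)};
\]
positivity in the step then reduces to an upper bound on $r_{k+1}$ in terms of $a_j$ and the previous $c_j^{(k)}$'s. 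Either route ultimately hinges on the same delicate interlacing property, which I expect to be the real technical heart of the proof.
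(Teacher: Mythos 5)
First, a point of reference: the paper does not prove this statement at all --- it is imported verbatim from \cite[Theorem~3.1]{CK07} and used as a black box in the proof of Theorem~\ref{thm:expander} --- so the benchmark for your attempt is the argument in \cite{CK07}, not anything in this paper. Your Part 1 is correct and complete: writing $p_n+\alpha p_{n-1}=\det(xI-J_n')$ with $J_n'$ the Jacobi matrix whose $(n,n)$-entry is shifted to $a_{n-1}-\alpha$, and invoking simplicity of the spectrum of an unreduced real symmetric tridiagonal matrix, is a clean standard route to $n$ distinct real roots for every $\alpha$. Your setup for Part 2 is also sound: $p_n+\alpha p_{n-1}$ is quasi-orthogonal of order one, the quadrature with nodes $r_1,\dots,r_n$ has positive weights and is exact through degree $2n-2$, and since $\deg(q_kp_j)\le 2n-2$ for $j\le k\le n-1$ the identity $c_j\|p_j\|^2=\sum_{i>k}\lambda_i q_k(r_i)p_j(r_i)$ with $q_k(r_i)>0$ is correct. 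Note that this already gives $c_k=1$ and, taking $j=0$ (where $p_0\equiv 1$), $c_0>0$ for free.

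The genuine gap is that the argument stops exactly where the content of the second assertion begins: strict positivity of the intermediate coefficients $c_j$ for $1\le j\le k-1$. You candidly acknowledge that neither of your routes closes this --- in the quadrature identity the factors $p_j(r_i)$ genuinely change sign, and in the recurrence $c_j^{(k+1)}=c_{j-1}^{(k)}+(a_j-r_{k+1})c_j^{(k)}+b_{j+1}c_{j+1}^{(k)}$ the middle term can be negative since $r_{k+1}$ may well exceed $a_j$. Diagnosing the missing ingredient as an interlacing property is reasonable (the proof in \cite{CK07} does run an induction that exploits the interlacing of the $r_i$ with the zeros of the orthogonal polynomials, which is available here via Cauchy interlacing for $J_n'$ and its leading principal submatrices), but naming the obstacle is not the same as surmounting it. As it stands the proposal establishes only the first half of the theorem, while the half that Theorem~\ref{thm:expander} actually relies on --- positivity of all coefficients of $\prod_{i=1}^k(x-r_i)$ in the basis $p_0,\dots,p_k$ --- remains unproved.
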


The following is a key theorem. 
\begin{theorem}\label{thm:expander}
Let $G$ be a connected $k$-regular graph of girth $g$.  
Assume the number of  distinct eigenvalues of $G$ is $d+1$. 
If $g \geq 2d$ holds, then $G$ is an extremal expander graph. 
\end{theorem}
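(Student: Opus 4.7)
My plan is to apply the linear programming bound (Theorem~\ref{thm:lp_bound}) with a polynomial $f$ of degree $2d-1$ that is tight for $G$ itself, and then exploit the equality case to show that any connected $k$-regular graph $G'$ on $v$ vertices with $\tau_1(G')<\tau_1$ would have an incompatible combination of girth and diameter. Since $g\geq 2d\geq 2d-1$, Theorem~\ref{thm:girth} makes $G$ distance-regular of diameter $d$; the girth hypothesis forces $c_i=1$, $a_i=0$, $b_i=k-1$ for $1\leq i\leq d-1$, together with $b_0=k$ and $a_d+c_d=k$, so $v_i=F_i^{(k)}$ for $i\leq d-1$ and $v_d=F_d^{(k)}/c_d$. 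Using the adjacency recurrence $\boldsymbol{A}\boldsymbol{A}_d=a_d\boldsymbol{A}_d+b_{d-1}\boldsymbol{A}_{d-1}$, a short calculation in the $F$-basis yields the factorization
\[
\prod_{i=1}^d(x-\tau_i)=G_d^{(k)}(x)+(c_d-1)G_{d-1}^{(k)}(x),\qquad c_d-1\geq 0.
\]

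Because the $G_i^{(k)}$ are monic orthogonal polynomials, Theorem~\ref{thm:CK} applied with $\alpha=c_d-1$ and $n=d$ identifies the roots of $G_d^{(k)}+\alpha G_{d-1}^{(k)}$ as $\tau_d<\tau_{d-1}<\cdots<\tau_1$, and guarantees that
\[
h(x):=\prod_{i=2}^d(x-\tau_i),
\]
the product of the $d-1$ smallest roots, has strictly positive coefficients in $G_0^{(k)},\ldots,G_{d-1}^{(k)}$; since $G_i^{(k)}=\sum_{j\leq i}F_j^{(k)}$, the same holds in the $F$-basis. I then take
\[
f(x)=(x-\tau_1)h(x)^2,
\]
which satisfies $f(k)>0$, $f(\tau_i)=0$ for $i\geq 1$, and $f(x)\leq 0$ on $(-\infty,\tau_1]$. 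Moreover
\[
(x-\tau_1)h(x)=G_d^{(k)}(x)+(c_d-1)G_{d-1}^{(k)}(x)=F_d^{(k)}(x)+c_d\sum_{j=0}^{d-1}F_j^{(k)}(x),
\]
so $(x-\tau_1)h$ itself has strictly positive $F$-coefficients in every degree $0\leq i\leq d$.

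The central technical step is showing that every $F$-coefficient $f_l$ of $f$ is strictly positive for $0\leq l\leq 2d-1$. Writing $f=\bigl[(x-\tau_1)h\bigr]\cdot h$ and expanding products via $F_i^{(k)}F_j^{(k)}=\sum_l p_l(i,j)F_l^{(k)}$, Theorem~\ref{thm:posi_coef} makes each $f_l$ a non-negative sum of products of positive numbers. Strict positivity then reduces to exhibiting, for each $0\leq l\leq 2d-1$, a single pair $(i,j)$ with $0\leq i\leq d$, $0\leq j\leq d-1$, and $p_l(i,j)>0$; the choice $(i,j)=(l,0)$ works for $0\leq l\leq d-1$ and $(i,j)=(d,l-d)$ works for $d\leq l\leq 2d-1$, both satisfying the triangle-and-parity criterion of Theorem~\ref{thm:posi_coef}.

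With every $f_l>0$ in hand I finish as follows. Since $g\geq 2d=\deg f+1$, Corollary~\ref{coro:girth} gives ${\rm tr}(F_i^{(k)}(\boldsymbol{A}))=0$ for $1\leq i\leq 2d-1$, so the LP bound is tight for $G$: $v=f(k)/f_0$. For any connected $k$-regular $G'$ on $v$ vertices with $\tau_1(G')\leq\tau_1$, Theorem~\ref{thm:lp_bound} gives $v\leq f(k)/f_0=v$, forcing equality; by Remark~\ref{rem:attain} this in turn forces $f(\tau_j(G'))=0$ for every $j\geq 1$ and, combined with $f_i>0$ for $1\leq i\leq 2d-1$, also $g(G')\geq 2d$ via Corollary~\ref{coro:girth}. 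If $\tau_1(G')<\tau_1$, then every non-trivial eigenvalue of $G'$ lies in $\{\tau_2,\ldots,\tau_d\}$, so $G'$ has at most $d$ distinct eigenvalues and hence diameter at most $d-1$; but any graph containing a cycle has girth $\leq 2\,\mathrm{diam}+1\leq 2d-1$, contradicting $g(G')\geq 2d$ (for $k\geq 2$ the graph $G'$ contains a cycle, and disconnected $k$-regular graphs satisfy $\tau_1=k>\tau_1$, so they are already excluded). Hence $\tau_1(G')\geq\tau_1$, and $G$ is an extremal expander. The main obstacle is paragraph three, where identifying $\tau_1,\ldots,\tau_d$ as the roots of a specific Gauss--Radau-type combination of $G_d^{(k)}$ and $G_{d-1}^{(k)}$ is exactly what lets Theorem~\ref{thm:CK} deliver the positive expansion of $h$ needed to make the LP equality conditions strong enough to force $g(G')\geq 2d$.
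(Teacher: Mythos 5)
Your proof is correct and follows essentially the same route as the paper: the same polynomial $f=(x-\tau_1)\prod_{i\geq 2}(x-\tau_i)^2$, the same identification of $\prod_{i=1}^d(x-\tau_i)$ with $G_d^{(k)}+(c_d-1)G_{d-1}^{(k)}$ (the paper obtains this via the Hoffman polynomial with $e=c_d$, you via distance-regularity and the intersection array), the same application of Theorems~\ref{thm:CK} and~\ref{thm:posi_coef} to get positive $F$-coefficients, and the same equality-case contradiction via Remark~\ref{rem:attain} and Corollary~\ref{coro:girth}. Your explicit pair-selection argument for strict positivity of every $f_l$ fills in a step the paper leaves to the reader.
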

\begin{proof}
Let $\tau_0=k>\tau_1>\ldots>\tau_{d}$ be the distinct eigenvalues of $G$. 
We show the polynomial 
\[
f(x)=(x-\tau_1)\prod_{i=2}^d(x-\tau_i)^2=\sum_{i=0}^{2d-1}f_i F_i^{(k)}(x)
\]
satisfies the condition in Theorem~\ref{thm:lp_bound}. 
It trivially holds that $f(k)>0$, and $f(\tau_i)=0$ for any $i=1,\ldots,d$.

Let $\boldsymbol{A}$ be the adjacency matrix of $G$.  
If the diameter of $G$ is greater than $d$, then 
the number of distinct eigenvalues is greater  than $d+1$. Thus 
 the diameter of $G$ is at most $d$. 
Since  $g \geq 2d$ holds, the diameter is exactly $d$. 
Then $G$ partially has  the structure of a homogeneous tree around any vertex, namely $F_i^{(k)}(\boldsymbol{A})=\boldsymbol{A}_i$ for any $i=0,1,\ldots,d-1$. 
Because the Hoffman polynomial \cite{H63} of $G$ is of degree $d$, there exists a natural number $e$ such that 
\[
\sum_{i=0}^{d-1}F_i^{(k)}(\boldsymbol{A})+\frac{1}{e} F_d^{(k)}(\boldsymbol{A})=\boldsymbol{J},
\]
where $\boldsymbol{J}$ is the all-ones matrix. 
 Note that the roots of the Hoffman polynomial 
$P(x)=\sum_{i=0}^{d-1}F_i^{(k)}(x)+(1/e) F_d^{(k)}(x)$ 
  are the non-trivial distinct eigenvalues of $G$ \cite{H63}.

For some positive constant number $c$, the polynomial $f(x)$ can be expressed as 
\[
f(x)= \frac{c P(x)^2}{x-\tau_1}
= \frac{c}{e} \frac{G_d^{(k)}(x)-(1-e) G_{d-1}^{(k)}(x)}{x-\tau_1}P(x).
\]
By Theorem~\ref{thm:CK}, 
$g(x)=(G_d^{(k)}(x)-(1-e) G_{d-1}^{(k)}(x))/(x-\tau_1)$ has  positive coefficients 
in terms of $G_{0}^{(k)}(x),G_1^{(k)}(x),\ldots, G_{d-1}^{(k)}(x)$. This implies that $g(x)$ has positive 
coefficients in terms of $F_{0}^{(k)}(x),F_1^{(k)}(x),\ldots, F_{d-1}^{(k)}(x)$. 
By 
Theorem~\ref{thm:posi_coef}, it is shown that $f(x)$ has positive 
coefficients in terms of $F_{0}^{(k)}(x),F_1^{(k)}(x),\ldots, F_{2d-1}^{(k)}(x)$. 
Thus $f(x)$ satisfies the condition in Theorem~\ref{thm:lp_bound}.

By Remark~\ref{rem:attain}, $G$ attains the linear programming bound 
obtained from $f(x)$. 
Assume there exists a graph $G'$ such that 
its second-largest eigenvalue is smaller than $\tau_1$, and it has the same number of vertices as $G$.   
Then $G'$ also attains the linear programming bound obtained from $f(x)$. 
By Remark~\ref{rem:attain}, $G'$ has only $d$ distinct eigenvalues,
 and the 
girth of $G'$ is at least $2d$.  Therefore $G'$ is of diameter at least $d$, it
contradicts that the number of distinct eigenvalues of $G'$ is greater than $d$.  Thus
$G$ is an extremal expander graph. 
\end{proof}

\begin{remark}
Levenshtein~\cite{L92} proved that a spherical $s$-distance set of strength $t$ satisfying $t\geq 2s-1$ is an optimal spherical code in the sense of maximizing the minimum distance. This result is  
the dual version of Theorem~\ref{thm:expander}. 
Cohn and Kumar \cite{CK07} extended this result to universally optimal codes.   
\end{remark}

We characterize connected regular graphs  satisfying $g \geq 2d$ as follows.  
\begin{theorem} \label{thm:girth}
Let $G$ be a connected $k$-regular graph of girth $g$, and with only $d+1$ distinct eigenvalues. 
If $g \geq 2d-1$ holds, then $G$ is a distance-regular graph of diameter $d$.    
\end{theorem}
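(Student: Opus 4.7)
My approach proceeds in four stages: a local identification $F_i^{(k)}(\boldsymbol{A})=\boldsymbol{A}_i$ for $i\leq d-1$; showing the diameter equals $d$ via a Moore-graph argument; extending the identification to $\boldsymbol{A}_d$ through the Hoffman polynomial; and a basis argument inside the polynomial algebra generated by $\boldsymbol{A}$.

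First, I would show $F_i^{(k)}(\boldsymbol{A})=\boldsymbol{A}_i$ for $0\leq i\leq d-1$. By Theorem~\ref{thm:red_path}, $(F_i^{(k)}(\boldsymbol{A}))_{u,v}$ counts irreducible walks of length $i$ from $u$ to $v$; by Corollary~\ref{coro:girth}, the girth hypothesis forbids any irreducible closed walk of length at most $2d-2$. Given two distinct irreducible $u$-to-$v$ walks of length $i\leq d-1$, the standard surgery of removing the maximal common prefix and suffix and concatenating the remainders (with one reversed) produces an irreducible closed walk of length at most $2i\leq 2d-2$, a contradiction. Hence there is at most one such walk; it must be a geodesic, which exists iff $d(u,v)=i$, giving the claimed identification.

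Next I would show the diameter $D$ equals $d$. Since $G$ has $d+1$ distinct eigenvalues, the minimal polynomial of $\boldsymbol{A}$ has degree $d+1$, so $D\leq d$. For $i\leq d-1$ the polynomial $F_i^{(k)}$ has degree smaller than $d+1$, so $F_i^{(k)}(\boldsymbol{A})\neq 0$; by Step~1 then $\boldsymbol{A}_i\neq 0$, forcing $D\geq d-1$. If $D=d-1$, Step~1 together with $k$-regularity gives $|\{v:d(u,v)=i\}|=F_i^{(k)}(k)=k(k-1)^{i-1}$ for every vertex $u$ and each $1\leq i\leq d-1$, hence $|V|=1+k\sum_{j=0}^{d-2}(k-1)^j$, attaining the Moore bound for diameter $d-1$. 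Then $G$ is a Moore graph, hence distance-regular with exactly $d$ distinct eigenvalues --- contradicting the hypothesis. So $D=d$.

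Since $G$ is connected, Hoffman's theorem provides a polynomial $H$ of degree $d$ with $H(\boldsymbol{A})=\boldsymbol{J}$. Combining with $\boldsymbol{J}=\sum_{i=0}^{d}\boldsymbol{A}_i$ (using $D=d$) and Step~1 yields $\boldsymbol{A}_d=H(\boldsymbol{A})-\sum_{i=0}^{d-1}F_i^{(k)}(\boldsymbol{A})$, so every $\boldsymbol{A}_i$ lies in the $(d+1)$-dimensional algebra generated by $\boldsymbol{A}$. The matrices $\boldsymbol{A}_0,\ldots,\boldsymbol{A}_d$ have pairwise disjoint nonzero supports, hence are linearly independent, and being $d+1$ elements of a $(d+1)$-dimensional space they form a basis. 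For each $i,j$ I can expand $\boldsymbol{A}_i\boldsymbol{A}_j=\sum_k c_{ij}^k\boldsymbol{A}_k$; reading the $(x,y)$-entry at any pair with $d(x,y)=k$ identifies $|\{z:d(x,z)=i,\,d(z,y)=j\}|$ with $c_{ij}^k$, independent of the chosen pair, proving $G$ is distance-regular of diameter $d$. The main obstacle is Step~2: converting the Moore-bound coincidence into a contradiction requires the classical fact that Moore graphs are distance-regular with exactly diameter-plus-one eigenvalues.
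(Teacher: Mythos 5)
Your proof is correct, but it takes a genuinely different route from the paper's. The paper disposes of Theorem~\ref{thm:girth} in a few lines by quoting Brouwer and Haemers \cite{BH93} --- a connected regular graph with girth at least $2D-1$, where $D$ is the diameter, is distance-regular --- together with the general inequality $d\geq D$; all of the combinatorial work is outsourced to that reference. You instead give a self-contained argument: the identification $F_i^{(k)}(\boldsymbol{A})=\boldsymbol{A}_i$ for $i\leq d-1$ (which the paper only invokes later, under the stronger hypothesis $g\geq 2d$, inside the proof of Theorem~\ref{thm:expander}), elimination of the case $D=d-1$, the Hoffman polynomial to capture $\boldsymbol{A}_d$, and the standard closure argument in the $(d+1)$-dimensional adjacency algebra. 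Two small points. First, in Step~1 the ``surgery'' on two irreducible walks is slightly informal: it is cleaner to note that an irreducible walk of length $i<g$ cannot repeat a vertex (a repeat would produce a cycle of length at most $i$), hence is a path, and that two distinct paths with the same endpoints and total length at most $2d-2<g$ force a forbidden short cycle; this also handles the case of a non-geodesic irreducible walk. Second, your Step~2 leans on the classical fact that Moore graphs are distance-regular with $D+1$ eigenvalues; this external input can be avoided entirely by observing that $D=d-1$ would make $\boldsymbol{J}=\sum_{i=0}^{d-1}F_i^{(k)}(\boldsymbol{A})$ a polynomial of degree $d-1$ in $\boldsymbol{A}$, whereas for a connected regular graph with $d+1$ distinct eigenvalues any polynomial $p$ with $p(\boldsymbol{A})=\boldsymbol{J}$ must vanish at the $d$ nontrivial eigenvalues and hence have degree at least $d$ \cite{H63}. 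With that substitution your proof becomes fully elementary, and as a bonus it establishes directly the structural facts (local tree structure and the Hoffman expression for $\boldsymbol{A}_d$) that the paper re-derives in the proof of Theorem~\ref{thm:expander}.
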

\begin{proof}
Brouwer and Haemers \cite{BH93} proved that a graph with the spectrum of a distance-regular graph with
diameter $D$ and girth at least $2D-1$, is such a graph.  The proof in \cite{BH93} used the
fact that regularity, connectedness, girth, and diameter of a graph are determined by the spectrum. Therefore
the theorem of Brouwer and Haemers is interpreted as that 
a connected regular graph with $g\geq 2D-1$ is distance-regular. In general we have $d\geq D$.  Therefore in our condition,  
$g\geq 2d-1 \geq 2D-1$ holds, and  $G$ is distance-regular. Since $G$ is distance-regular, $d$ is equal to the diameter \cite[Section~4.1]{BCNb}.
\end{proof}
 Abiad, Van Dam, and Fiol \cite{AVFx}  proved Theorem~\ref{thm:girth} independently. 
\begin{remark}
Delsarte, Goethals, and Seidel~\cite{DGS77} proved that a spherical $s$-distance set of strength $t$ satisfying $t\geq 2s-2$ has the structure of 
a $Q$-polynomial association scheme. This result is  
the dual version of Theorem~\ref{thm:girth}. 
\end{remark}
The distance-regular graph with $g\geq 2d$ is called a {\it Moore polygon} \cite{DG81} and it has the following intersection array: 
\[
\begin{pmatrix}
\ast & 1    & 1  &   \cdots & 1 &c \\
0     & 0    & 0  &   \cdots &0 & k-c \\
k     & k-1 &k-1& \cdots & k-1 & \ast 
\end{pmatrix},
\]
where $c$ is a natural number. 
If $c=1$, then the graph is a Moore graph and 
it does not exist for $d\geq 3$ (with $k \geq 3$) \cite{BI73,D73}.  
If $c=k$, then the graph is an incidence graph of a regular generalized 
polygon of order $(s,s)$ \cite[Section 6.9]{BCNb},  
and it does not exist for $d\geq 7$ (with $k \geq 3$) \cite{FH64}. 
If $c\ne 1,k$, then the graph is called a {\it non-trivial} Moore polygon, and it does not exist for $d\geq 6$ \cite{DG81}. Strongly regular graphs of girth $4$
are non-trivial Moore polygons.

\begin{table} 
\caption{Extremal expander graphs}\label{tab:2}
\begin{tabular}{ccccc}
$v$    &$k$   & $g$& Eigenvalues & Name \\ \hline
$g$    &$2$    & $g$ & $2\cos(2k\pi/n); 1 \leq k \leq n-1$&$g$-cycle $C_g$ \\
$k+1$ & $k$  & $3$&$0$ & complete $K_{k+1}$\\
$2k$   & $k$   &  $4$&$0,-k$ & comp.\ bipartite  $K_{k,k}$ \\
$2\sum_{i=0}^2 q^i$& $q+1$     &  $6$& $\pm \sqrt{q},-(q+1)$& inc.\ graph of $PG(2,q)$ \cite{S66,BCNb} \\
$2\sum_{i=0}^3 q^i$&  $q+1$     &  $8$ &$\pm \sqrt{2q},0,-(q+1)$  &inc.\ graph of $GQ(q,q)$ \cite{B66,BCNb} \\
$2\sum_{i=0}^5 q^i $ &  $q+1$     &   $12$&
$\pm\sqrt{3q}, \pm \sqrt{q}, 0, -(q+1)$  &inc.\ graph of $GH(q,q)$ \cite{B66,BCNb}\\
$10$ &3&  5 & $1^5$, $(-2)^4$&Petersen \cite{HS60}\\
$50$ &7 & 5 &$2^{28}$, $(-3)^{21}$ &Hoffman--Singleton \cite{HS60} \\
$35$ &4 & 6 & $2^{14}$, $(-1)^{14}$, $(-3)^{6}$ & Odd graph \cite{M82} \\
$16$ &5 &  4 &$1^{10}$, $(-3)^5$ &Clebsch \cite{S68,G95} \\
 $56$ &10 &4 &$2^{35}$, $(-4)^{20}$ &Gewirtz \cite{BH93,G95} \\
$77$ &16 &4& $2^{55}$, $(-6)^{21}$&$M_{22}$ \cite{HS68,G95} \\
$100$ &22  &4 & $2^{77}$, $(-8)^{22}$ &Higman--Sims \cite{HS68,G95}
\end{tabular}\\
$PG(2,q)$: projective plane, $GQ(q,q)$: generalized quadrangle, \\
$GH(q,q)$: generalized hexagon, $q$: prime power 
\end{table}

Table \ref{tab:2} shows  known examples of extremal expander graphs satisfying $g \geq 2d$. 
The following graphs are unique:
Petersen graph \cite{HS60}, 
Hoffman--Singleton graph \cite{HS60}, 
Odd graph \cite{M82}, 
Clebsch graph  \cite[Theorem~10.6.4]{GRb},  
Gewirtz graph \cite{G69}, 
$M_{22}$ graph \cite{B83},
Higman--Sims graph \cite{G69}, 
$PG(2,q)$ for $q \leq 8$ \cite{M07,H53,HSW56}, 
$GQ(q,q)$ for $q \leq 4$ \cite{P75,P77}, and $GH(2,2)$ \cite{CT85}.  
For $PG(2,9)$, there are four non-isomorphic graphs \cite{HSK59,VM07}.  The uniqueness of other examples in Table \ref{tab:2} is open. 

\bigskip

\noindent
\textbf{Acknowledgments.} 
The author would like to thank Noga Alon, Eiichi Bannai, Tatsuro Ito, Jack Koolen, Hajime Tanaka,  Sho Suda, and Masato Mimura for useful information and comments.  
The author is supported by JSPS KAKENHI Grant Numbers 25800011, 26400003.

\end{document}